\numberwithin{equation}{section}
\numberwithin{figure}{section}
\theoremstyle{plain}
\newtheorem{thm}{\protect\theoremname}[section]
  \theoremstyle{definition}
  \newtheorem{defn}[thm]{\protect\definitionname}
  \theoremstyle{plain}
  \newtheorem{lem}[thm]{\protect\lemmaname}
  \theoremstyle{remark}
  \newtheorem*{acknowledgement*}{\protect\acknowledgementname}
  \theoremstyle{remark}
  \newtheorem{rem}[thm]{\protect\remarkname}
  \theoremstyle{plain}
  \newtheorem{prop}[thm]{\protect\propositionname}
  \theoremstyle{plain}
  \newtheorem{cor}[thm]{\protect\corollaryname}
\renewenvironment{proof}[1][\proofname]{\par
      \pushQED{\qed}%
      \normalfont \topsep6\p@\@plus6\p@\relax
      \trivlist
      \item[\hskip\labelsep
            \bfseries
        #1\@addpunct{.}]\ignorespaces
    }{%
  \popQED\endtrivlist\@endpefalse
}
  \providecommand{\acknowledgementname}{Acknowledgement}
  \providecommand{\corollaryname}{Corollary}
  \providecommand{\definitionname}{Definition}
  \providecommand{\lemmaname}{Lemma}
  \providecommand{\propositionname}{Proposition}
  \providecommand{\remarkname}{Remark}
\providecommand{\theoremname}{Theorem}
\begin{document}

\title{Translated points and Rabinowitz Floer homology}

\author{Peter Albers and Will J. Merry}


\maketitle
\begin{abstract}
We prove that if a contact manifold admits an exact filling then every
local contactomorphism isotopic to the identity admits a translated
point in the interior of its support, in the sense of Sandon \cite{Sandon_On_iterated_translated_points_for_contactomorphisms_of_R2n+1_and_R2nxS1}. In addition we prove that if the Rabinowitz Floer homology of the
filling is non-zero then every contactomorphism isotopic to the identity
admits a translated point, and if the Rabinowitz Floer homology of
the filling is infinite dimensional then every contactmorphism isotopic
to the identity has either infinitely many translated points, or a
translated point on a closed leaf. Moreover if the contact manifold
has dimension greater than or equal to $3$, the latter option generically
doesn't happen. Finally, we prove that a generic compactly supported contactomorphism on $\mathbb{R}^{2n+1}$ has infinitely many geometrically distinct iterated translated points all of which lie in the interior of its support.
\end{abstract}

\section{Introduction}

Let $(\Sigma^{2n-1},\xi)$ denote a coorientable closed connected contact manifold,
and let $\alpha$ denote a 1-form on $\Sigma$ such that $\xi=\ker\,\alpha$.
Let $R_{\alpha}$ denote the Reeb vector field of $\alpha$, and let
$\phi_{t}^{\alpha}:\Sigma\rightarrow\Sigma$ denote the flow of $R_{\alpha}$. 

Denote by $\mbox{Cont}(\Sigma,\xi)$ the group of contactomorphisms
$\psi:\Sigma\rightarrow\Sigma$, and denote by $\mbox{Cont}_{0}(\Sigma,\xi)\subseteq\mbox{Cont}(\Sigma,\xi)$
those contactomorphisms $\psi$ that are contact isotopic to $\mathbb{1}$. 
\begin{defn}
Fix $\psi\in\mbox{Cont}(\Sigma,\xi)$, and write $\psi^{*}\alpha=\rho\alpha$
for $\rho\in C^{\infty}(\Sigma,\mathbb{R}^{+})$. We say that a point
$x\in\Sigma$ is a \textbf{translated point }for $\psi$ if
there exists $\tau\in\mathbb{R}$ such that 
\[
\psi(x) = \phi_{\tau}^{\alpha}(x)\ \ \ \mbox{and}\ \ \ \rho(x)=1.
\]

If $x$ is not a periodic point of the Reeb flow $\phi_{t}^{\alpha}$
then such a $\tau$ is uniquely determined, and in this case we call
$\tau$ the \textbf{time shift }of $x$. If $x$ is a periodic point
then $\tau$ is no longer uniquely determined - for if $\phi_{T}^{\alpha}(x)=x$
then $\psi(x)=\phi_{\nu T+\tau}^{\alpha}(x)$ for each $\nu\in\mathbb{N}$.
We say that a point
$x\in\Sigma$ is an \textbf{iterated translated point }for $\psi$ if it is a translated point for some iteration $\psi^n$.
\end{defn}
The notion of (iterated) translated points was introduced by Sandon in \cite{Sandon_On_iterated_translated_points_for_contactomorphisms_of_R2n+1_and_R2nxS1}
and further explored in \cite{Sandon_A_Morse_estimate_for_translated_points_of_contactomorphisms_of_spheres_and_projectives_paces}. We refer to the reader
to these papers for a discussion as to why translated points are a
worthwhile concept to study.

Let $\xi_{\textrm{st}}$ denote the standard contact structure on
$\mathbb{R}^{2n-1}$. Suppose $\sigma:\mathbb{R}^{2n-1}\rightarrow\mathbb{R}^{2n-1}$
is a contactomorphism such that $\mathfrak{S}(\sigma):=\mbox{supp}(\sigma)$
is compact, and suppose that $\mathtt{x}:\mathbb{R}^{2n-1}\rightarrow U\subseteq\Sigma$
is a Darboux chart onto an open subset $U$ of $\Sigma$. Then we
can form a contactomorphism $\psi:\Sigma\rightarrow\Sigma$ such that
$\psi=\mathtt{x}\circ\sigma\circ\mathtt{x}^{-1}$ on $U$ and $\psi=\mathbb{1}$ on $\Sigma\backslash U$.
We call $\psi$ the the \textbf{local contactomorphism }induced from
$\sigma$. In this case we are only interested in translated points
of $\psi$ in the interior of $\mathfrak{S}(\psi)$. Indeed, if $x\in\Sigma\backslash\mathfrak{S}(\psi)$
then $x$ is vacuously a translated point of $\psi$. 

\begin{rem}\label{rem:local_cont}
Since a ball of arbitrary radius in $\mathbb{R}^{2n-1}$ is contactomorphic to $\mathbb{R}^{2n-1}$, see \cite{Chekanov_Koert_Schlenk_Minimal_atlases_of_closed_contact_manifolds}, any contactomorphism of $\mathbb{R}^{2n-1}$ gives rise to a local contactomorphism via an appropriate Darboux chart.
\end{rem}

\begin{defn}
We say that the coorientable closed connected contact manifold $(\Sigma,\xi=\ker\,\alpha)$ admits an \textbf{exact filling
}if there exists a compact symplectic manifold $(M,d\lambda_{M})$
such that $\Sigma:=\partial M$ and such that $\alpha=\lambda_{M}|_{\Sigma}$. 
\end{defn}
In this case let us denote by $X:=M\cup_{\Sigma}\left\{ \Sigma\times[1,\infty)\right\} $
the \textbf{completion} of $M$. Define 
\[
\lambda:=\begin{cases}
\lambda_{M}, & \mbox{on }M,\\
r\alpha, & \mbox{on }\Sigma\times[1,\infty).
\end{cases}
\]
Then $(X,d\lambda)$ is an exact symplectic manifold that is convex at infinity, and $\Sigma\subseteq X$
is a hypersurface of restricted contact type. 

Since $X$ is an exact symplectic manifold that is convex at infinity and $\Sigma$ is a hypersurface
of restricted contact type, the \textbf{Rabinowitz Floer homology
}of the pair $(\Sigma,X)$ is a well defined $\mathbb{Z}_{2}$-vector
space. Rabinowitz Floer homology was discovered by Cieliebak and Frauenfelder
in \cite{Cieliebak_Frauenfelder_Restrictions_to_displaceable_exact_contact_embeddings}, and has since generated many
applications in symplectic topology (we refer to the survey article
\cite{Albers_Frauenfelder_RFH_Survey} for more information on Rabinowitz
Floer homology). 

We can now state our main result. 
\begin{thm}
\label{thm:main}Suppose $(\Sigma,\xi)$ is a closed contact manifold
admitting an exact filling $(M,d\lambda_{M})$. Then:
\begin{enumerate}
\item If $\psi\in\mbox{\emph{Cont}}_{0}(\Sigma,\xi)\setminus\{\mathbb{1}\}$ is a local contactomorphism
then $\psi$ has translated point $x\in\mbox{\emph{int}}(\mathfrak{S}(\psi))$.
\item If the Rabinowitz Floer homology $\mbox{\emph{RFH}}(\Sigma,X)$ does
not vanish then every $\psi\in\mbox{\emph{Cont}}_{0}(\Sigma,\xi)$ has
a translated point. 
\item If $\mbox{\emph{RFH}}(\Sigma,X)$ is infinite dimensional then for
$\psi\in\mbox{\emph{Cont}}_{0}(\Sigma,\xi)$ either $\psi$ has infinitely
many translated points or $\psi$ has a translated point lying on
a closed leaf of $R_{\alpha}$.
\item If $\dim\,\Sigma\geq3$ and $\Sigma$ is non-degenerate then a generic $\psi\in\mbox{\emph{Cont}}_{0}(\Sigma,\xi)$ has no translated point lying on a closed leaf of $R_{\alpha}$.
\item For a generic $\psi\in\mbox{\emph{Cont}}_{0}(\Sigma,\xi)$ the following holds. If $x\in\Sigma$ is a translated point for $\psi^n$, $n\in\mathbb{N}$, then $x$ is \textbf{\emph{not}} a translated point for $\psi,\psi^2,\ldots,\psi^{n-1}$.
\end{enumerate}
\end{thm}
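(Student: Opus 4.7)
The plan is to apply a parametric Sard--Smale transversality argument to a universal moduli space of common translated points of different iterates. Fix integers $1\le k<m$ and parametrize $\mbox{Cont}_{0}(\Sigma,\xi)$ by a Banach space $\mathcal{H}$ of sufficiently regular time-dependent contact Hamiltonians, with surjective time-one map $H\mapsto\psi^{H}$. Introduce the universal moduli space
\[
\mathcal{M}_{k,m} \;:=\; \bigl\{(H,x,\tau_{k},\tau_{m})\in\mathcal{H}\times\Sigma\times\mathbb{R}^{2} \;:\; (\psi^{H})^{j}(x)=\phi^{\alpha}_{\tau_{j}}(x),\ \rho^{H}_{j}(x)=1,\ j\in\{k,m\}\bigr\},
\]
where $\rho^{H}_{j}$ denotes the conformal factor of $(\psi^{H})^{j}$. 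The defining conditions comprise $2(\dim\Sigma+1)$ equations on a domain of dimension $\dim\mathcal{H}+\dim\Sigma+2$.

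The main step is to prove that the linearization of this system is surjective at every point of $\mathcal{M}_{k,m}$, so that $\mathcal{M}_{k,m}$ is a Banach submanifold of the expected codimension. I would exploit the following independence of perturbations: after an initial generic perturbation ensuring the forward orbit $\{x,\psi^{H}(x),\ldots,(\psi^{H})^{m-1}(x)\}$ is injective, a Hamiltonian variation $\eta$ supported in a small neighborhood of $(\psi^{H})^{j}(x)$ for $k\le j<m$ alters $(\psi^{H+s\eta})^{m}$ at $x$ to first order while leaving $(\psi^{H+s\eta})^{k}$ unchanged near $x$. Complementary perturbations supported at orbital positions $0\le j<k$ move both iterates, and free variation in $\tau_{k},\tau_{m}$ accounts for the Reeb direction. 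Together these should span the cokernel of the linearization.

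Granted transversality, $\dim\mathcal{M}_{k,m}=\dim\mathcal{H}-\dim\Sigma$, so the projection $\pi:\mathcal{M}_{k,m}\to\mathcal{H}$ is Fredholm of strictly negative index $-\dim\Sigma$. Sard--Smale then yields a residual subset $\mathcal{G}_{k,m}\subset\mathcal{H}$ disjoint from the image of $\pi$. By the Baire category theorem the countable intersection $\bigcap_{1\le k<m}\mathcal{G}_{k,m}$ remains residual, and its image under $H\mapsto\psi^{H}$ is the desired residual subset of $\mbox{Cont}_{0}(\Sigma,\xi)$ for statement (5).

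The principal difficulty will be the surjectivity calculation, which must handle two degenerate configurations: points $x$ whose forward orbit is not injective, and points lying on a closed Reeb leaf (where the time shifts $\tau_{j}$ are not uniquely determined). The former can be dispatched by a preliminary generic perturbation making the initial orbit segment injective; the latter fits within the framework of (4), which already ensures that such points are generically avoided when $\dim\Sigma\ge3$.
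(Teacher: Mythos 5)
Your proposal only engages with assertion (5); assertions (1)--(4) are not addressed at all, and they carry most of the content of the theorem. In the paper, (1)--(3) are existence statements proved with Rabinowitz Floer theory: translated points of $\psi=\psi_1$ are identified (via Lemma \ref{lem:trans =00003D le} and Lemma \ref{lem:critical points}) with critical points of a perturbed Rabinowitz action functional $\mathcal{A}_{c}$ on $\mathcal{L}X\times\mathbb{R}$; Proposition \ref{prop:linfinity-1} and Proposition \ref{prop:linfinity} supply the $L^{\infty}$-bounds needed to define $\mbox{RFH}(\mathcal{A}_{c})_{a}^{b}$; the invariance $\mbox{RFH}(\{\psi_t\},\Sigma,X)\cong\mbox{RFH}(\Sigma,X)$ gives (2); the Hofer-norm estimate $|\eta|\leq\left\Vert H_{c}\right\Vert<\wp(\Sigma,\alpha)$ after the contact rescaling of Remark \ref{loca} gives (1); and (3) invokes Kang's theorem on survival of infinitely many critical points. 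None of this can be recovered from a Sard--Smale argument, since transversality alone never produces existence. A complete proof must contain this machinery (or a substitute for it).

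For (5) itself, your finite-dimensional universal moduli space $\mathcal{M}_{k,m}$ is a genuinely different route from the paper's (which works with pairs of critical points of $\mathcal{A}_{H^{\#l}}$ and $\mathcal{A}_{H^{\#m}}$ on loop space and an evaluation map transverse to the diagonal $\Delta\subseteq\Sigma\times\Sigma$), but the step you defer is exactly the one that is hard. Surjectivity of the linearization must hold at \emph{every} point of $\mathcal{M}_{k,m}$, including the degenerate configurations: if $x$ is a discriminant point ($\psi^H(x)=x$, $\rho^H_1(x)=1$), or more generally if the orbit segment $\{x,\psi^H(x),\dots,(\psi^H)^{m-1}(x)\}$ is non-injective, then the conditions for $j=k$ and $j=m$ are algebraically dependent and your localized perturbations cannot span the cokernel; the space is simply not cut out transversally there. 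Saying these cases are ``dispatched by a preliminary generic perturbation'' is circular --- making every such orbit segment injective for every common translated point is itself a transversality statement of the same order of difficulty, requiring a stratification argument you do not supply. Likewise, deferring closed-leaf points to (4) imports the hypotheses $\dim\Sigma\geq3$ and nondegeneracy of $\Sigma$, which (5) does not assume. The paper avoids all of this by the reduction $(l,m)\mapsto(l,m-l)$: the two relevant critical loops $v,w$ then satisfy $v(t)\neq w(t)$ for $t\in[\tfrac12,1]$ by construction, so transversality is only ever needed on the open stratum $\mathcal{B}$, where the cokernel computation reduces to the explicitly solvable ODE \eqref{eq:P is zero}. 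Your argument, as it stands, does not close this gap.
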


\begin{rem}
Property 5.~in Theorem \ref{thm:main} holds in fact for leafwise intersections as the proof will show. 
\end{rem}

The following corollary is well-known and follows from Chekanov's work \cite{Chekanov_Critical_points_of_quasifunctions_and_generating_families_of_Legendrian_manifolds}.

\begin{cor}
Any $\psi\in\mbox{\emph{Cont}}_{0}(\mathbb{R}^{2n-1},\xi_{\textrm{\emph{st}}})\setminus\{\mathbb{1}\}$ admits a translated point $x\in\mbox{\emph{int}}(\mathfrak{S}(\psi))$.
\end{cor}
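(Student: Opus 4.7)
I plan to deduce the corollary from part (1) of Theorem \ref{thm:main} applied to the standard contact sphere $(S^{2n-1},\xi_{\text{st}})$, which admits the exact filling $(B^{2n},\lambda_{\text{st}})$ with $\lambda_{\text{st}}=\tfrac12\sum_i(x_i\,dy_i-y_i\,dx_i)$. Writing $\psi$ as in the statement (which I take to be compactly supported, as is natural for elements of $\text{Cont}_0$ on a non-compact manifold), the strategy is to realise $\psi$ as a local contactomorphism on $S^{2n-1}$, invoke Theorem \ref{thm:main}(1), and pull the resulting translated point back to $\mathbb{R}^{2n-1}$. The main subtlety sits in the pullback, because the Reeb flow of $(S^{2n-1},\alpha_{\text{st}})$ is the periodic Hopf flow whereas the Reeb flow of $(\mathbb{R}^{2n-1},\alpha_{\text{st}})$ is the complete translation in $z$; they only agree locally inside a Darboux chart.

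To tame this mismatch, I first conjugate by the contact dilation $\phi_r(x,y,z)=(rx,ry,r^2z)$, which satisfies $\phi_r^*\alpha_{\text{st}}=r^2\alpha_{\text{st}}$. A short pullback calculation shows that translated points of $\bar\psi:=\phi_r^{-1}\psi\phi_r$ are in bijection with those of $\psi$ (time shifts scaled by $r^2$, conformal factors preserved at corresponding points), so it is enough to exhibit one for $\bar\psi$. Choosing $r$ large, I arrange $\text{supp}(\bar\psi)\Subset V$ for an arbitrarily small Euclidean ball $V$. I then pick a Darboux chart $\mathtt{x}:V\to U\subset S^{2n-1}$ whose image is a small geodesically convex round ball. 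Since Hopf orbits are great-circle geodesics, convexity of $U$ forces each Reeb orbit of $\alpha_{S^{2n-1}}$ to meet $U$ in at most one connected arc, i.e.\ $U$ is Reeb-convex. Extending $\mathtt{x}\circ\bar\psi\circ\mathtt{x}^{-1}$ by the identity produces a local contactomorphism $\tilde\psi\in\text{Cont}_0(S^{2n-1},\xi_{\text{st}})\setminus\{\mathbb{1}\}$.

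Theorem \ref{thm:main}(1) now supplies a translated point $\tilde x\in\text{int}(\mathfrak{S}(\tilde\psi))\subset U$ with $\tilde\psi(\tilde x)=\phi_\tau^{\alpha_{S^{2n-1}}}(\tilde x)$ and conformal factor $1$. Reeb-convexity of $U$ permits me to select a representative of $\tau$ modulo the Hopf period for which the connecting Reeb arc remains inside $U$. Combined with the Darboux identity $\mathtt{x}^*\alpha_{S^{2n-1}}=\alpha_{\text{st}}$, this arc pulls back to a straight Reeb line in $\mathbb{R}^{2n-1}$, so $\mathtt{x}^{-1}(\tilde x)$ is a translated point of $\bar\psi$ in the interior of its support; undoing the contact dilation via $\phi_r$ finishes the proof. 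As anticipated, the hard step is this final Reeb alignment: without both the contact dilation (to shrink $\text{supp}(\tilde\psi)$ inside $U$) and the Reeb-convexity of $U$, the $\tau$ output by Theorem \ref{thm:main}(1) could correspond to a Hopf arc that exits and re-enters $U$, and would therefore fail to pull back to the straight Reeb line in $\mathbb{R}^{2n-1}$ needed for the translated point condition.
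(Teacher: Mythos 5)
Your argument is correct and follows essentially the same route as the paper's one-line proof: realise $\psi$ (after a contact dilation shrinking its support) as a local contactomorphism of the standard fillable sphere $(S^{2n-1},\xi_{\textrm{st}})$ via a Darboux chart, apply part (1) of Theorem \ref{thm:main}, and pull the translated point back --- this is exactly the content of Remark \ref{rem:local_cont} combined with the rescaling of Remark \ref{loca}. The extra care you take with Reeb-convexity of the chart image, ensuring the connecting Hopf arc stays inside the chart so that the translated-point condition transfers to the Reeb flow of $(\mathbb{R}^{2n-1},\alpha_{\textrm{st}})$, is a genuine detail that the paper's citation-style proof leaves implicit, and your treatment of it is sound.
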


\begin{proof}
This follows from Theorem \ref{thm:main} together with Remark \ref{rem:local_cont}.
\end{proof}

\begin{cor}
A generic $\psi\in\mbox{\emph{Cont}}_{0}(\mathbb{R}^{2n-1},\xi_{\textrm{\emph{st}}})\setminus\{\mathbb{1}\}$ admits infinitely many geometrically distinct iterated translated points all of which lie in  $\mbox{\emph{int}}(\mathfrak{S}(\psi))$.
\end{cor}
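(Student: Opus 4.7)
The plan is to combine parts (1) and (5) of Theorem \ref{thm:main} applied to the iterates of $\psi$: part (1) will ensure that each non-trivial $\tilde{\psi}^k$ has a translated point in the interior of its support, while part (5) will guarantee that the resulting points are distinct.

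First, we use Remark \ref{rem:local_cont} to view any $\psi\in\mbox{Cont}_0(\mathbb{R}^{2n-1},\xi_{\textrm{st}})\setminus\{\mathbb{1}\}$ as a local contactomorphism $\tilde{\psi}\in\mbox{Cont}_0(\Sigma,\xi)\setminus\{\mathbb{1}\}$ via a Darboux chart $\mathtt{x}:\mathbb{R}^{2n-1}\rightarrow U\subseteq\Sigma$, where we choose $(\Sigma,\xi)$ to be any closed contact manifold of dimension $2n-1$ admitting an exact filling---for instance $\Sigma=S^{2n-1}$ filled by the standard ball in $\mathbb{C}^n$. A compactly supported contact isotopy from $\mathbb{1}$ to $\psi$ lifts through $\mathtt{x}$ to one from $\mathbb{1}$ to $\tilde{\psi}$, so indeed $\tilde{\psi}\in\mbox{Cont}_0(\Sigma,\xi)$. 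Moreover $\tilde{\psi}^k$ is the local contactomorphism induced from $\psi^k$, and $\mathfrak{S}(\tilde{\psi}^k)\subseteq\mathfrak{S}(\tilde{\psi})\subseteq U$ for every $k\in\mathbb{N}$.

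Next, for a generic $\psi$ we may assume that $\tilde{\psi}^k\neq\mathbb{1}$ for every $k\in\mathbb{N}$, since having infinite order is a residual property in $\mbox{Cont}_0(\mathbb{R}^{2n-1},\xi_{\textrm{st}})\setminus\{\mathbb{1}\}$. Part (1) of Theorem \ref{thm:main} applied to the nontrivial local contactomorphism $\tilde{\psi}^k$ then yields a translated point $x_k\in\mbox{int}(\mathfrak{S}(\tilde{\psi}^k))\subseteq\mbox{int}(\mathfrak{S}(\tilde{\psi}))\subseteq U$, and transporting by $\mathtt{x}^{-1}$ produces an iterated translated point $y_k\in\mbox{int}(\mathfrak{S}(\psi))\subseteq\mathbb{R}^{2n-1}$ of $\psi$. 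To show the $y_k$ are pairwise distinct, and hence geometrically distinct, we invoke part (5): if $y_n=y_m$ with $n<m$ then $x_n=x_m$ would be a translated point for both $\tilde{\psi}^n$ and $\tilde{\psi}^m$, directly contradicting the conclusion of (5) applied to $\tilde{\psi}$.

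The main obstacle is the genericity step, namely that a generic $\psi$ on $\mathbb{R}^{2n-1}$ produces a $\tilde{\psi}$ lying in the generic subset of $\mbox{Cont}_0(\Sigma,\xi)$ on which the conclusion of part (5) holds. This should follow because the perturbations used in proving (5) can be chosen to be compactly supported inside $U$, so that the relevant generic subset of $\mbox{Cont}_0(\Sigma,\xi)$ pulls back through $\mathtt{x}$ to a generic subset of $\mbox{Cont}_0(\mathbb{R}^{2n-1},\xi_{\textrm{st}})\setminus\{\mathbb{1}\}$; intersecting with the residual set where $\psi$ has infinite order yields the desired generic hypothesis.
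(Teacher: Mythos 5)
Your proof is correct and follows essentially the same route as the paper: the paper simply cites the preceding corollary (itself Theorem \ref{thm:main}(1) combined with Remark \ref{rem:local_cont}) to produce a translated point $x_n\in\mbox{int}(\mathfrak{S}(\psi^n))$ for each iterate, and then invokes property (5) to conclude that the set $\{x_n\,:\,n\in\mathbb{N}\}$ cannot be finite. Your extra care about transferring genericity through the Darboux chart and about the nontriviality of the iterates $\psi^k$ addresses points the paper leaves implicit, but does not change the argument.
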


\begin{proof}
By the previous corollary every $\psi^n$ admits a translated point $x_n\in\mbox{int}(\mathfrak{S}(\psi^n))$. By property 5.~in Theorem \ref{thm:main} the set $\{x_{n}\,:\,n\in\mathbb{N}\}$ cannot be finite for a generic $\psi$.
\end{proof}

\begin{rem}
Sandon proved in \cite{Sandon_A_Morse_estimate_for_translated_points_of_contactomorphisms_of_spheres_and_projectives_paces} that any \textbf{positive} $\psi$ admits infinitely many geometrically distinct iterated translated points.
\end{rem}

In order to explain the idea behind the proof of Theorem \ref{thm:main},
we need to introduce a few more definitions. Recall that from $(\Sigma,\xi)$ we can build the \textbf{symplectization
}of $\Sigma$, which is the exact symplectic manifold $(S\Sigma,d(r\alpha))$,
where
\[
S\Sigma:=\Sigma\times\mathbb{R}^{+},
\]
and $r$ is the coordinate on $\mathbb{R}^{+}:=(0,\infty)$. Suppose
$\psi\in\mbox{Cont}(\Sigma,\xi)$. There exists a unique positive
smooth function $\rho\in C^{\infty}(\Sigma,\mathbb{R}^{+})$ such
that $\psi^{*}\alpha=\rho\alpha$. We define the \textbf{symplectization
}of $\psi$ to be the symplectomorphism $\varphi:S\Sigma\rightarrow S\Sigma$
defined by 
\[\label{eqn:phi}
\varphi(x,r)=(\psi(x),r\rho(x)^{-1}).
\]
Let us now go back to the completion $X$ of $M$. Let $Y_{M}$ denote
the Liouville vector field of $\lambda_{M}$ (defined by $i_{Y_{M}}d\lambda_{M}=\lambda_{M}$).
The entire symplectization $S\Sigma$ embeds into $X$ via the flow
of $Y_{M}$, and under this embedding the vector field $Y$ on $X$
defined by 
\[
Y:=\begin{cases}
Y_{M}, & \mbox{on }M,\\
r\partial_{r}, & \mbox{on }S\Sigma
\end{cases}
\]
satisfies $i_{Y}d\lambda=\lambda$ on all of $X$. Note that under
this embedding $S\Sigma\hookrightarrow X$, the hypersurface $\Sigma\times\{1\}$
in $S\Sigma$ is identified with $\Sigma$ in $X$. 

Suppose $\varphi\in\mbox{Symp}(X,\omega)$. A point $x\in\Sigma$
is called a \textbf{leaf-wise intersection point }for $(\Sigma,\varphi)$
if there exists $\tau\in\mathbb{R}$ such that 
\[
\varphi(x,1)=(\phi_{\tau}^{\alpha}(x),1).
\]
We point out that this definition still makes sense if $\varphi$ is only defined on
$S\Sigma\subseteq X$ rather than on all of $X$. 
Our starting point is the following observation of Sandon \cite{Sandon_On_iterated_translated_points_for_contactomorphisms_of_R2n+1_and_R2nxS1}.
\begin{lem}
\label{lem:trans =00003D le}Fix $\psi\in\mbox{\emph{Cont}}(\Sigma,\xi)$
and let $\varphi\in\mbox{\emph{Symp}}(S\Sigma,d(r\alpha))$ denote
the symplectization of $\psi$. Then a point $x\in\Sigma$ is a translated
point for $\psi$ if and only if $(x,1)$ is a leaf-wise intersection
point for $\varphi$.
\end{lem}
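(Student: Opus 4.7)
The plan is to prove both implications by directly unpacking the definition of the symplectization $\varphi(x,r) = (\psi(x), r\rho(x)^{-1})$ and comparing the two conditions component by component. The key observation is that the condition $\rho(x) = 1$ in the definition of a translated point is precisely what is enforced by the requirement in the leaf-wise intersection condition that $\varphi(x,1)$ has \emph{second coordinate equal to $1$}.

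For the forward implication, suppose $x \in \Sigma$ is a translated point, so there exists $\tau \in \mathbb{R}$ with $\psi(x) = \phi_\tau^\alpha(x)$ and $\rho(x) = 1$. Substituting $r = 1$ into the definition of $\varphi$ gives
\[
\varphi(x,1) = (\psi(x),\, 1 \cdot \rho(x)^{-1}) = (\phi_\tau^\alpha(x),\, 1),
\]
which is exactly the leaf-wise intersection condition for $(x,1)$.

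For the converse, suppose $(x,1)$ is a leaf-wise intersection point, so there exists $\tau \in \mathbb{R}$ with $\varphi(x,1) = (\phi_\tau^\alpha(x), 1)$. Using the explicit formula for $\varphi$ again, this equation reads $(\psi(x), \rho(x)^{-1}) = (\phi_\tau^\alpha(x), 1)$. Comparing second coordinates forces $\rho(x) = 1$, and comparing first coordinates then gives $\psi(x) = \phi_\tau^\alpha(x)$, so $x$ is a translated point for $\psi$ with time shift $\tau$.

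There is no serious obstacle here; the statement is essentially a book-keeping exercise and the only mildly nontrivial point is noticing that restricting to the hypersurface $\Sigma \times \{1\}$ (rather than some other level $\Sigma \times \{r\}$) is what couples the two requirements $\psi(x) = \phi_\tau^\alpha(x)$ and $\rho(x) = 1$ into the single equation $\varphi(x,1) = (\phi_\tau^\alpha(x),1)$.
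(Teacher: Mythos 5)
Your proof is correct; the paper states this lemma as an observation of Sandon and omits the proof entirely, and your direct unpacking of $\varphi(x,1)=(\psi(x),\rho(x)^{-1})$ is exactly the intended one-line verification. The observation that the second coordinate of the leaf-wise intersection condition encodes $\rho(x)=1$ is precisely the point of the lemma.
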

This reduces the existence problem for translated points of $\psi$
to the existence problem of leaf-wise intersections for $\varphi$.
Urs Frauenfelder and the first author developed in \cite{Albers_Frauenfelder_Leafwise_intersections_and_RFH}
a variational characterization for the leaf-wise intersection problem using the Rabinowitz action functional, and it is precisely this
characterization that we will exploit.\textbf{ }
\begin{acknowledgement*}
We thank Urs Frauenfelder and Sheila Sandon for several useful
comments, and we thank the anonymous referee for their careful readings of our manuscript.
\end{acknowledgement*}

\section{Proofs}

In this article we use the same sign conventions as \cite{Abbo_Schwarz_Estimates_and_computations_in_RFH}.
The symplectic gradient $X_{H}$ of a Hamiltonian $H\in C^{\infty}(X,\mathbb{R})$
is defined by $i_{X_{H}}d\lambda=-dH$. An almost complex structure
$J$ on $X$ is compatible with $d\lambda$ if $d\lambda(J\cdot,\cdot)$
defines a Riemannian metric on $X$ (warning: this sign convention
is \textbf{not }standard). When doing Rabinowitz Floer homology we
work with negative gradient flow lines.\newline
Fix once and for all a 1-form $\alpha\in\Omega^{1}(\Sigma)$ such
that $\xi=\ker\,\alpha$, and fix an exact symplectic filling $(M,d\lambda_{M})$.
As usual denote the completion of $(M,d\lambda_{M})$ by $(X,d\lambda)$.
We denote by 
\begin{equation}
\wp(\Sigma,\alpha)>0\label{eq:smallest reeb}
\end{equation}
the minimal period of an orbit of $R_{\alpha}$ that is contractible
in $X$.

We denote by $\mathcal{C}(\Sigma,\xi)$ the set of contact isotopies
$\{\psi_{t}\}$ for $t\in\mathbb{R}$ which satisfy $\psi_{0}=\mathbb{1}$
and $\psi_{t+1}=\psi_{t}\circ\psi_{1}$. The universal cover $\widetilde{\mbox{Cont}}_{0}(\Sigma,\xi)$
of $\mbox{Cont}_{0}(\Sigma,\xi)$ consists of equivalence classes
of members of $\mathcal{C}(\Sigma,\xi)$, where two paths $\{\psi_{t}\}$
and $\{\psi_{t}'\}$ are equivalent if there exists a smooth family
$\{\psi_{s,t}\}$ for $(s,t)\in[0,1]\times\mathbb{R}$ such that $\psi_{0,t}=\psi_{t}$
and $\psi_{1,t}=\psi_{t}'$ with $\{\psi_{s,t}\}\in\mathcal{C}(\Sigma,\xi)$
for each $s\in[0,1]$. 

The infinitesimal generator $W$ of a contact isotopy $\{\psi_{t}\}\in\mathcal{C}(\Sigma,\xi)$
is defined by 
\[
W(x):=\frac{\partial}{\partial t}\Bigl|_{t=0}\psi_{t}(x),
\]
and we say that $\{\psi_{t}\}$ is \textbf{generated} by the function
$h:\mathbb{R}/\mathbb{Z}\times\Sigma\rightarrow\mathbb{R}$ defined
by 
\begin{equation}
h(t,x):=\alpha_{\psi_{t}(x)}(W(\psi_{t}(x))\label{eq:the function f}
\end{equation}
($h$ is 1-periodic because $\psi_{t+1}=\psi_{t}\circ\psi_{1}$).

Suppose $\{\psi_{t}\}\in\mathcal{C}(\Sigma,\xi)$. In this case if
$\psi:=\psi_{1}$ then the symplectization $\varphi$ of $\psi$ belongs
to $\mbox{Ham}(S\Sigma,d(r\alpha))$. Indeed, if we define the \textbf{contact
Hamiltonian }$H:\mathbb{R}/\mathbb{Z}\times S\Sigma\rightarrow\mathbb{R}$
associated to $\{\psi_{t}\}$\textbf{ }by 
\[
H(t,x,r):=rh(t,x),
\]
where $h$ is the function from \eqref{eq:the function f}, and we
denote by $\varphi_{t}$ the Hamiltonian flow of $H$, then it can
be shown (see for instance \cite[Proposition 2.3]{Albers_Frauenfelder_A_variational_approach_to_Giventals_nonlinear_Maslov_index})
that 
\[
\varphi_{t}(x,r)=(\psi_{t}(x),r\rho_{t}(x)^{-1}).
\]
Thus the symplectization $\varphi$ of $\psi$ is simply $\varphi_{1}$. We define 
\[
F_{0}:S\Sigma\rightarrow\mathbb{R}
\]
by
\[
F_{0}(x,r):=f(r),
\]
 where 
\[
f(r):=\frac{1}{2}(r^{2}-1)\ \ \ \mbox{on }(\tfrac12,\infty),
\]
\[
f''(r)\geq0\ \ \ \mbox{for all }r\in\mathbb{R}^{+},
\]
\begin{equation}
\lim_{r\rightarrow0}f(r)=-\frac{1}{2}+\varepsilon\label{eq:lim f}
\end{equation}
for some small $\varepsilon>0$. Note that the Hamiltonian vector
field $X_{F_{0}}$ is given by $X_{F_{0}}(x,r)=f'(r)R_{\alpha}(x)$;
in particular $X_{F_{0}}|_{\Sigma\times\{1\}}=R_{\alpha}$.

Let $\chi\in C^{\infty}(S^{1},[0,\infty))$ denote a smooth function such that $\chi>0$ on $(0,\tfrac12)$
and $\chi=0$ on $[\tfrac12,1]$, and such that 
\[
\int_{0}^{1}\chi(t)dt=\int_{0}^{\frac12}\chi(t)dt=1.
\]

Finally fix a smooth function $\vartheta:[0,1]\rightarrow[0,1]$ such
that $\vartheta(t)=0$ for $t\in[0,\tfrac12]$, and such that $\vartheta(1)=1$
with $0\leq\dot{\vartheta}(t)\leq4$ for all $t\in[0,1]$. Denote
by $\mathcal{L}(S\Sigma):=C^{\infty}(S^{1},S\Sigma)$. We now define
the Rabinowitz action functional we will work with. 
\begin{defn}
We define the \textbf{Rabinowitz action functional} 
\[
\mathcal{A}:\mathcal{L}(S\Sigma)\times\mathbb{R}\rightarrow\mathbb{R}
\]
by 
\[
\mathcal{A}(v,\eta):=\int_{0}^{1}v^{*}\lambda-\eta\int_{0}^{1}\chi(t)F_{0}(v)dt-\int_{0}^{1}\dot{\vartheta}(t)H(\vartheta(t),v)dt.
\]

\end{defn}
A simple calculation tells us that if $(v,\eta)\in\mbox{Crit}(\mathcal{A})$
then if we write $v(t)=(x(t),r(t))$ we have 
\[
\begin{cases}
\dot{v}(t)=\eta\chi(t)X_{F_{0}}(v)+\dot{\vartheta}(t)X_{H}(\vartheta(t),v),\\
\int_{0}^{1}\chi(t)F_{0}(v)dt=0.
\end{cases}
\]
The following lemma appears in \cite[Proposition 2.4]{Albers_Frauenfelder_Leafwise_intersections_and_RFH},
and explains the connection between the Rabinowitz action functional
and leaf-wise intersection points (and hence translated points, via
Lemma \ref{lem:trans =00003D le}).
\begin{lem} \label{lem:critical points}Define
\[
e:\mbox{\emph{Crit}}(\mathcal{A})\rightarrow\Sigma
\]
by 
\[
e(v,\eta):=x(\tfrac12)
\]
where $v=(x,r)$. Then $e$ is a surjection onto the set of translated
points for $\psi$. If $\psi$ has no translated points lying on closed
leaves of $R_{\alpha}$ then $e$ is a bijection. If $x(\tfrac12)$ is
not a periodic point of $\phi_{t}^{\alpha}$ then $x(\tfrac12)$ has time
shift $-\eta$. Finally, 
\[
\mathcal{A}(v,\eta)=\eta
\]
for $(v,\eta)\in\mbox{\emph{Crit}}(\mathcal{A})$.\end{lem}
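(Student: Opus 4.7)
The plan is to analyze the Euler--Lagrange equations on the two subintervals $[0,\tfrac12]$ and $[\tfrac12,1]$ separately, exploiting the fact that $\chi$ and $\dot\vartheta$ have disjoint supports so that the two pieces of the equation of motion decouple completely.

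On $[0,\tfrac12]$ the ODE reduces to $\dot v = \eta\chi(t) X_{F_0}(v)$. Since $X_{F_0}(x,r)=f'(r)R_\alpha(x)$ has no $\partial_r$-component, the $r$-coordinate is constant on this interval, say $r(t)=r_0$. The Lagrange-multiplier constraint then becomes $f(r_0)\int_0^{1/2}\chi\,dt = f(r_0)=0$, and convexity of $f$ together with the asymptotic condition \eqref{eq:lim f} forces $r_0=1$. Reparametrizing time via $s(t)=\eta\int_0^t\chi$ yields $x(\tfrac12)=\phi_\eta^\alpha(x(0))$. On $[\tfrac12,1]$ the ODE is $\dot v=\dot\vartheta(t)X_H(\vartheta(t),v)$; reparametrizing by $\vartheta$ shows $v(1)=\varphi_{\vartheta(1)}(v(\tfrac12))=\varphi(v(\tfrac12))$, where $\varphi$ is the symplectization of $\psi$.

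Combining the loop condition $v(0)=v(1)$ with the explicit formula $\varphi(x,1)=(\psi(x),\rho(x)^{-1})$ forces $\psi(x(\tfrac12))=x(0)$ and $\rho(x(\tfrac12))=1$; together with $x(0)=\phi_{-\eta}^\alpha(x(\tfrac12))$ this shows $e(v,\eta)=x(\tfrac12)$ is a translated point with time shift $-\eta$. Conversely, given any translated point $z$ with $\psi(z)=\phi_\tau^\alpha(z)$ and $\rho(z)=1$, setting $\eta=-\tau$ and running these same flows in reverse produces a critical point mapping to $z$, which gives surjectivity. If $z$ lies on no closed Reeb leaf then $\tau$, and hence $\eta$ and $(v,\eta)$, is unique, yielding injectivity.

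For the action value, the key identity is $\lambda(X_H)=H$ on $S\Sigma$, which will follow from $H(t,x,r)=rh(t,x)$ being fiberwise linear in $r$ together with the fact that the Reeb component of $X_H$ equals $h$ (so $r\alpha(X_H)=rh=H$). Substituting into $\mathcal A$ cancels the $H$-term and leaves
\[
\mathcal A(v,\eta)=\eta\int_0^1\chi(t)\bigl(rf'(r)-f(r)\bigr)\,dt;
\]
the first step of the proof already established $r\equiv 1$ on the support of $\chi$, so using $f(1)=0$ and $f'(1)=1$ this collapses to $\eta\int_0^1\chi\,dt=\eta$. The only real obstacle is verifying $\lambda(X_H)=H$ for the symplectization of a contact Hamiltonian, which requires a brief computation in the splitting $TS\Sigma=\mathbb{R}R_\alpha\oplus\mathbb{R}\partial_r\oplus\xi$; everything else is bookkeeping around the two-interval decomposition induced by the cutoffs $\chi$ and $\vartheta$.
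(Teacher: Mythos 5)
Your proposal is correct and follows essentially the same route as the paper: decompose $S^1$ into $[0,\tfrac12]$ and $[\tfrac12,1]$ using the disjoint supports of $\chi$ and $\dot\vartheta$, deduce $r\equiv1$ and $x(\tfrac12)=\phi^{\alpha}_{\eta}(x(0))$ on the first piece, identify $v(1)=\varphi(v(\tfrac12))$ on the second, close up the loop, invert the construction for surjectivity/bijectivity, and compute the action via $\lambda(X_H)=H$. The only cosmetic differences are that you argue $r$ is constant from $X_{F_0}$ having no $\partial_r$-component (rather than from $F_0$ being preserved by its own flow) and verify $\lambda(X_H)=H$ by a splitting computation rather than via $\lambda(X_H)=dH(r\partial_r)$ and homogeneity; both are fine.
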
 \begin{proof}
Let $\varphi:S\Sigma\rightarrow S\Sigma$ denote the symplectization
of $\psi$. Suppose $(v,\eta)\in\mbox{Crit}(\mathcal{A})$. Write
$v(t)=(x(t),r(t))\in\Sigma\times\mathbb{R}^{+}$. For $t\in[0,\tfrac12]$
one has $\dot{v}(t)=\eta\chi(t)X_{F_{0}}(v(t))$. Since $F_{0}$ is
autonomous, $F_{0}$ is constant on flow lines of $\eta\chi X_{F_0}$and thus $t\mapsto F_{0}(v(t))=f(r(t))$
is constant for $t\in[0,\tfrac12]$. The second condition tells this constant
is $0$, and hence $r(t)=1$ for $t\in[0,\tfrac12]$. Thus $v(\tfrac12)=(\phi_{\eta}^{\alpha}(x(0)),1)$.

Next, for $t\in[\tfrac12,1]$ we have $\dot{v}(t)=\dot{\vartheta}(t)X_{H}(\vartheta(t),v(t))$.
Thus $v(t)=\varphi_{\vartheta(t)}(v(\tfrac12))$ for $t\in[\tfrac12,1]$. In
particular, $\varphi(v(\tfrac12))=(\phi_{-\eta}^{\alpha}(v(\tfrac12)),1)$,
and thus $v(\tfrac12)$ is a leaf-wise intersection point of $\varphi$,
and so $x(\tfrac12)$ is a translated point of $\psi$. Moreover if $x(\tfrac12)$
is not a periodic point of $\phi_{t}^{\alpha}$ then it has time shift
$-\eta$. 

Conversely, if $x\in\Sigma$ is a translated point of $\psi$ with
time shift $\tau$ then if $v:S^{1}\rightarrow X$ is defined by 
\[
v(t):=\begin{cases}
\left(\phi_{-\tau\chi(t)}^{\alpha}(\phi_{-\tau}^{\alpha}(x)),1\right), & t\in[0,\tfrac12],\\
\varphi_{\vartheta(t)}(x), & t\in[\tfrac12,1],
\end{cases}
\]
then $v$ is smooth and if $\eta:=-\tau$ then $(v,\eta)\in\mbox{Crit}(\mathcal{A})$.
If there are no translated point lying on closed leaves of $R_{\alpha}$
then these two operations are mutually inverse to each other, and
hence in this case $e$ is a bijection. 

In order to prove the last statement, we first note that 
\[
\lambda(X_{F_{0}}(x,r))=f'(r)\alpha_{x}(R_{\alpha}(x))=f'(r),
\]
\[
\lambda(X_{H}(x,r))=dH(x,r)(r\partial_{r})=H(x,r),
\]
and hence
\begin{align*}
\mathcal{A}(v,\eta) & =\int_{0}^{\frac12}\lambda(\eta\chi X_{F_{0}}(v))dt+\int_{\frac12}^{1}\left[\lambda(\dot{\vartheta}X_{H}(\vartheta(t),v))-\dot{\vartheta}H(\vartheta(t),v)\right]dt\\
 & =\eta+0.
\end{align*}

\end{proof}
Unfortunately, in order to be able to define the Rabinowitz Floer
homology, we cannot work with $\mathcal{A}$ as it is not defined
on all of $\mathcal{L}X\times\mathbb{R}$. In order to rectify this,
we extend $F_{0}$ and $H$ to Hamiltonians defined on all of $X$.
Here are the details. Define 
\[
F:X\rightarrow\mathbb{R}
\]
by setting
\[
F|_{X\backslash S\Sigma}:=-\tfrac12+\varepsilon,
\]
where $\varepsilon>0$ is as in \eqref{eq:lim f}, and defining $F=F_{0}$
on $S\Sigma$. Next, for $c>0$ let $\beta_{c}\in C^{\infty}([0,\infty),[0,1])$
denote a smooth function such that 
\[
\beta_{c}(r)=\begin{cases}
1, & r\in[e^{-c},e^{c}],\\
0, & r\in[0,e^{-2c}]\cup[e^{c}+1,\infty),
\end{cases}
\]
and such that 
\[
0\leq\dot{\beta}_{c}(r)\leq2e^{2c}\ \ \ \mbox{for }r\in[e^{-2c},e^{-c}],
\]
\[
-2\leq\dot{\beta}_{c}(r)\leq0\ \ \ \mbox{for }r\in[e^{c},e^{c}+1].
\]
Then define $H_{c}:[0,1]\times X\rightarrow\mathbb{R}$ by 
\[
H_{c}|_{[0,1]\times(X\backslash S\Sigma)}:=0,
\]
and for $(t,x,r)\in[0,1]\times S\Sigma$, 
\[
H_{c}(t,x,r):=\beta_{c}(r)r\dot{\vartheta}(t)h(\vartheta(t),x).
\]

\begin{rem}\label{rmk:above_which_is_cut_off}
Note that for any $c>0$, $H_{c}$ is a compactly supported 1-periodic
Hamiltonian on $X$ with the property that $H_{c}(t,\cdot,\cdot)=0$
for $t\in[0,\tfrac12]$. Moreover the \textbf{Hofer norm }$\left\Vert H_{c}\right\Vert $
of $H_{c}$ satisfies 
\[
\left\Vert H_{c}\right\Vert \leq4(e^{c}+1)(h_{+}+h_{-}),
\]
where 
\[
h_{+}:=\max_{(t,x)\in\mathbb{R}/\mathbb{Z}\times\Sigma}h(t,x),\ \ \ h_{-}:=-\min_{(t,x)\in\mathbb{R}/\mathbb{Z}\times\Sigma}h(t,x).
\]

\end{rem}

\begin{rem}
\label{loca}Suppose $\sigma:\mathbb{R}^{2n-1}\rightarrow\mathbb{R}^{2n-1}$
is a contactomorphism such that $\mathfrak{S}(\sigma)$ is compact,
and suppose that $\mathtt{x}:\mathbb{R}^{2n-1}\rightarrow U\subseteq\Sigma$
is a Darboux chart onto an open subset $U$ of $\Sigma$. Let $\psi:\Sigma\rightarrow\Sigma$
denote the local contactomorphism such that $\psi=\sigma\circ\mathtt{x}$
on $U$ and $\psi=\mathbb{1}$ on $\Sigma\backslash U$. Given $R>0$,
let $\tau_{R}:\mathbb{R}^{2n-1}\rightarrow\mathbb{R}^{2n-1}$ denote
the contact rescaling defined by $\tau_{R}(\mathbf{x},\mathbf{y},z)=(R\mathbf{x},R\mathbf{y},R^{2}z)$
for $(\mathbf{x},\mathbf{y},z)\in\mathbb{R}^{n-1}\times\mathbb{R}^{n-1}\times\mathbb{R}$.
There is a 1-1 correspondence between the translated points of $\sigma$
and the translated points of the conjugation $\sigma_{R}:=\tau_R\circ\sigma\circ\tau^{-1}_{R}$ as follows: $(\mathbf{x},\mathbf{y},z)$ is a translated point of $\sigma$ if and only if $\tau_R(\mathbf{x},\mathbf{y},z)$ is a translated point of $\sigma_R$. Moreover if $\sigma$ is generated by the function $h(t,x)$ then $\sigma_R$ is generated by $R^2h(t,\tau_R^{-1}(x))$.

We denote by $\psi_{R}$ the local contactomorphism of $\Sigma$
corresponding to $\sigma_{R}$ and the function $H_{c,R}$ corresponding to $\psi_R$. Then for fixed $c>0$ we can choose $R$ so small that the Hofer norm of $H_{c,R}$ is smaller than $\wp(\Sigma,\alpha)$.
\end{rem}
We now extend the Rabinowitz action functional $\mathcal{A}$ to a
new functional
\[
\mathcal{A}_{c}:\mathcal{L}X\times\mathbb{R}\rightarrow\mathbb{R}
\]
by
\[
\mathcal{A}_{c}(v,\eta):=\int_{0}^{1}v^{*}\lambda-\eta\int_{0}^{1}\chi(t)F(v)dt-\int_{0}^{1}H_{c}(t,v)dt.
\]
The following result is the key to the present paper. The proof is
similar to (but simpler than) \cite[Proposition 4.3]{Albers_Frauenfelder_A_variational_approach_to_Giventals_nonlinear_Maslov_index}.
\begin{prop}
\label{prop:linfinity-1}There exists $c_0>0$ such that if $c>c_{0}$
then if $(v,\eta)\in\mbox{\emph{Crit}}(\mathcal{A}_{c})$ then $v(S^{1})\subseteq\Sigma\times\mathbb{R}^{+}$,
and moreover if we write $v(t)=(x(t),r(t))$ then $r(S^{1})\subseteq(e^{-c/2},e^{c/2})$.\end{prop}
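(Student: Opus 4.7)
The plan is to proceed in two stages: first, confine a critical point $(v,\eta) \in \mbox{Crit}(\mathcal{A}_c)$ to the symplectization piece $S\Sigma \hookrightarrow X$; second, derive a quantitative bound on the radial coordinate of $v$ inside $S\Sigma$.

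For the first stage, observe that on the complement $X \setminus S\Sigma$ the function $F$ is constant equal to $-\tfrac12 + \varepsilon$ and $H_c$ vanishes identically, so the Hamiltonian vector fields $X_F$ and $X_{H_c}$ both vanish throughout $X \setminus S\Sigma$. If $v(t_0) \in \mbox{int}(X \setminus S\Sigma)$ for some $t_0$, then the critical point equation
\[
\dot v(t) = \eta\chi(t) X_F(v(t)) + X_{H_c}(t, v(t))
\]
forces $v$ to be locally constant near $t_0$; an open-closed argument applied to $\{t \in S^1 : v(t) = v(t_0)\}$ shows that $v$ is globally constant, but then the second critical point equation yields $\int_0^1 \chi(t) F(v)\,dt = -\tfrac12 + \varepsilon \neq 0$, a contradiction. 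Hence $v(S^1) \subseteq S\Sigma$.

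For the second stage I would work in coordinates $(x,r)$ on $S\Sigma$. Since $\dot\vartheta \equiv 0$ on $[0,\tfrac12]$, one has $H_c(t,\cdot) \equiv 0$ there, so $\dot v = \eta\chi(t) X_F(v)$ on $[0,\tfrac12]$. As $X_F = f'(r) R_\alpha$ is tangent to $\Sigma$, the function $r$ is constant on $[0,\tfrac12]$, say $r \equiv r_0$, and the constraint $\int_0^1 \chi(t) F(v)\,dt = 0$ then reduces to $f(r_0) = 0$. Using convexity ($f'' \geq 0$) together with \eqref{eq:lim f} and $f(r) = \tfrac12(r^2-1)$ on $(\tfrac12, \infty)$, one sees that $f$ is strictly negative on $(0,\tfrac12]$ and vanishes on $(\tfrac12,\infty)$ only at $r = 1$; hence $r_0 = 1$.

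On $[\tfrac12,1]$ the equation becomes $\dot v = X_{H_c}(t,v)$. A direct computation of the Hamiltonian vector field of $H_c(t,x,r) = \beta_c(r)\, r\, \dot\vartheta(t)\, h(\vartheta(t),x)$ with respect to $d(r\alpha) = dr \wedge \alpha + r\,d\alpha$ shows that its $\partial_r$-component equals $-\beta_c(r)\, r\, \dot\vartheta(t)\, (R_\alpha \cdot h)(\vartheta(t),x)$, where $R_\alpha \cdot h$ denotes the derivative of $h$ along $R_\alpha$. Setting $K := \|R_\alpha \cdot h\|_{L^\infty(S^1 \times \Sigma)}$ and using $|\beta_c| \leq 1$ and $\dot\vartheta \leq 4$, this yields $\bigl|\tfrac{d}{dt}\log r(t)\bigr| \leq 4K$ on $[\tfrac12,1]$. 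Integrating from $\tfrac12$, where $\log r = 0$, gives $|\log r(t)| \leq 2K$ on all of $S^1$, so taking $c_0 := 4K$ ensures that for any $c > c_0$ we have $r(S^1) \subseteq (e^{-c/2}, e^{c/2})$, as desired. The main subtlety lies in the first stage: one has to exploit the specific form of the cut-offs to prevent a critical point from escaping into $X \setminus S\Sigma$, since $\mathcal{A}_c$ is defined on all of $\mathcal{L}X \times \mathbb{R}$ rather than merely on $\mathcal{L}(S\Sigma) \times \mathbb{R}$; everything downstream is then a Grönwall-type bound combined with the convexity of $f$.
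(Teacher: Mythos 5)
Your first stage contains a genuine gap, and it is not a technicality: the set $X\setminus S\Sigma$ is the skeleton of the Liouville domain, $\bigcap_{t>0}\theta_{-t}(M)$ where $\theta_t$ denotes the flow of $Y$, and since $\theta_{-t}^{*}(d\lambda)^{n}=e^{-nt}(d\lambda)^{n}$ this set has zero volume and therefore \emph{empty interior}. Consequently the hypothesis ``$v(t_0)\in\mbox{int}(X\setminus S\Sigma)$'' is never satisfied, your open--closed argument on $\{t:v(t)=v(t_0)\}$ is vacuous, and the conclusion ``hence $v(S^1)\subseteq S\Sigma$'' does not follow: a trajectory could a priori reach a skeleton point, which lies in the closure of $S\Sigma$ but not in the interior of its complement, and at such a point knowing that $X_F$ and $X_{H_c}$ vanish \emph{on} $X\setminus S\Sigma$ does not make $v$ locally constant. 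On $[0,\tfrac12]$ the issue is harmless (constancy of $F\circ v$ plus the constraint forces $F(v(t))=0$, while $F\equiv-\tfrac12+\varepsilon\neq0$ on $X\setminus S\Sigma$, so $v([0,\tfrac12])\subseteq\Sigma\times\{1\}$, as you essentially observe). But on $[\tfrac12,1]$ your coordinate computation presupposes $v(t)\in S\Sigma$, which is precisely what has to be proved. The missing step is a continuation argument: the set of $t\in[\tfrac12,1]$ with $v([\tfrac12,t])\subseteq S\Sigma$ is open, and your differential inequality shows it is closed, since on it $r$ is trapped in the compact set $[e^{-2K},e^{2K}]\subseteq S\Sigma$. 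This is exactly the paper's open--closed argument on the connected component $I_0$ of $\{t:r(t)\in(e^{-c/2},e^{c/2})\}$ containing $[0,\tfrac12]$; with your estimate it is a one-line addition, but without it the proof is incomplete.

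Your second stage, by contrast, is correct and takes a slightly different and in some respects cleaner route than the paper. The paper works with the \emph{uncut} contact Hamiltonian, whose flow is the symplectization $\varphi_{\vartheta(t)}$, reads off $\dot r=-\dot\vartheta\,(\dot\rho_{\vartheta(t)}/\rho_{\vartheta(t)}^{2})\,r$, and must therefore bootstrap (the equation is only valid where $\beta_c\equiv1$), which is why its open--closed argument is run on the region $r\in(e^{-c/2},e^{c/2})$ rather than on $S\Sigma$. You instead compute $X_{H_c}$ for the cut-off Hamiltonian directly; your formula for the $\partial_r$-component is correct (the $\beta_c'$ term only enters the $R_\alpha$-direction), so $|\tfrac{d}{dt}\log r|\leq4\|R_\alpha\cdot h\|_{\infty}$ holds wherever $v(t)\in S\Sigma$, and both approaches produce a constant $c_0$ depending only on the contact isotopy and not on $c$, as required.
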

\begin{proof}
We know that $r(t)=1\in(e^{-c/2},e^{c/2})$ for all $t\in[0,\tfrac12]$.
Thus if 
\[
I:=\left\{ t\in S^{1}\,:\, r(t)\in(e^{-c/2},c^{c/2})\right\} 
\]
 then $I$ is a non-empty open interval containing the interval $[0,\tfrac12]$.
Let $I_{0}\subseteq I$ denote the connected component containing
0. We show that $I_{0}$ is closed, whence $I_{0}=I=[0,1]$. 

If $v(t)\in\Sigma\times(e^{-c},e^{c})$ and $t\in[\tfrac12,1]$ then $r(t)$
satisfies the equation
\[
\dot{r}(t)=-\dot{\vartheta}(t)\frac{\dot{\rho}_{\vartheta(t)}(x(t))}{\rho_{\vartheta(t)}^2(x(t))}\cdot r(t),
\]
cp~equation \eqref{eqn:phi}. Set 
\[
C:=\max\left\{ \left|\frac{\dot{\rho}_{t}(x)}{\rho_{t}^2(x)}\right|\,:\,(t,x)\in[0,1]\times\Sigma\right\} .
\]
Since $0\leq\dot{\vartheta}\leq4$, we see that for $t\in I_{0}\cap[\tfrac12,1]$
it holds that 
\[
e^{-4C}\leq r(t)\leq e^{4C}.
\]
In particular, provided $c>c_{0}:=8C$ then we have that if $v(t)\in\Sigma\times(e^{-c},e^{c})$
then actually $v(t)\in\Sigma\times(e^{-c/2},e^{c/2})$. This shows
that $I_{0}$ is closed as required. 
\end{proof}
As an immediate corollary, we obtain:
\begin{cor}
\label{cor:For--the}For $c>c_{0}$ the critical point equation and
the critical values for critical points of $\mathcal{A}_{c}$ are
independent of $c$. In fact, they agree with those of $\mathcal{A}$.\end{cor}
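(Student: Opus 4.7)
The plan is to leverage Proposition \ref{prop:linfinity-1} to confine the image of any critical loop of $\mathcal{A}_c$ to the region where the cutoffs used to define $F$ and $H_c$ are trivial, and then to read off that $\mathcal{A}_c$ and $\mathcal{A}$ have identical integrands along such loops. Fix $c>c_0$ and let $(v,\eta)\in\mathrm{Crit}(\mathcal{A}_c)$ with $v(t)=(x(t),r(t))$. By Proposition \ref{prop:linfinity-1} the loop $v$ lies in $\Sigma\times(e^{-c/2},e^{c/2})\subseteq S\Sigma$, which sits strictly inside the region $[e^{-c},e^{c}]$ on which $\beta_c\equiv 1$, and of course inside $S\Sigma$ on which $F$ was defined to coincide with $F_0$.

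On this confinement region I would carry out the following comparison. First, $F(v(t))=F_0(v(t))=f(r(t))$, so the $F$-term in $\mathcal{A}_c$ reduces to the $F_0$-term in $\mathcal{A}$. Second, because $\beta_c(r(t))=1$, the definition of $H_c$ collapses to
\[
H_c(t,v(t))=r(t)\dot{\vartheta}(t)h(\vartheta(t),x(t))=\dot{\vartheta}(t)H(\vartheta(t),v(t)),
\]
so the $H_c$-term of $\mathcal{A}_c$ agrees pointwise with the $\dot{\vartheta}H$-term of $\mathcal{A}$. Consequently $\mathcal{A}_c(v,\eta)=\mathcal{A}(v,\eta)$, and in particular by Lemma \ref{lem:critical points} the critical value is $\eta$, independently of $c$. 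To match the critical point equations, I would observe that a variation $(\delta v,\delta\eta)$ of $(v,\eta)$ supported near the confined loop sees only $F_0$ and the uncut $\dot{\vartheta}H$, since $\beta_c\equiv 1$ with vanishing derivative on an open neighbourhood of $v(S^1)$; hence $d\mathcal{A}_c(v,\eta)=d\mathcal{A}(v,\eta)$ on such variations, and the two critical point equations coincide. Conversely, any critical point of $\mathcal{A}$ has $r\equiv 1$ on $[0,\tfrac12]$ and is propagated by $\varphi_{\vartheta(t)}$ on $[\tfrac12,1]$, so its $r$-component stays in a compact subset of $\mathbb{R}^+$ independent of $c$; for $c>c_0$ large enough this subset lies in $(e^{-c/2},e^{c/2})$ and so the loop is also critical for $\mathcal{A}_c$.

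The argument is really just bookkeeping once Proposition \ref{prop:linfinity-1} is in hand: increasing $c$ beyond $c_0$ only enlarges the plateau on which $\beta_c=1$ and thus cannot change the critical set, critical values, or critical point equations. The one place where care is needed is to make sure that the non-autonomy introduced by $\vartheta$ in $H_c$ matches the $\dot{\vartheta}H(\vartheta(t),\cdot)$ integrand of $\mathcal{A}$, but this is precisely what the definition of $H_c$ in Remark \ref{rmk:above_which_is_cut_off} arranges. No genuine obstacle remains.
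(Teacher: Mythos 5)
Your argument is correct and is exactly the (implicit) reasoning behind the paper's one-line deduction of this corollary from Proposition \ref{prop:linfinity-1}: the confinement of critical loops to $\Sigma\times(e^{-c/2},e^{c/2})$ places them, together with a neighbourhood, in the region where $F=F_{0}$ and $\beta_{c}\equiv1$, so both the differentials and the values of $\mathcal{A}_{c}$ and $\mathcal{A}$ agree there, and the converse inclusion follows from the bound $r(t)\in[e^{-4C},e^{4C}]=[e^{-c_{0}/2},e^{c_{0}/2}]$ already established in the proof of that proposition (so your hedge ``for $c>c_{0}$ large enough'' is unnecessary --- it holds for every $c>c_{0}$).
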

\begin{rem}
\label{It-is-important}It is important to note that if $\psi$ is
a local contactomorphism then the constant $c_{0}=c_{0}(\psi)$ is
invariant under the contact rescaling $\tau_{R}$ from Remark \ref{loca}.
More precisely, if $\psi_{R}$ is the rescaling of $\psi$ as in Remark
\ref{loca}, then $c_{0}(\psi)=c_{0}(\psi_{R})$. 
\end{rem}
Fix a family $\mathbf{J}=(J_{t})_{t\in S^{1}}$ of $\omega$-compatible
almost complex structures on $X$ such that the restriction $J_{t}|_{\Sigma\times[1,\infty)}$
is of SFT-type (see \cite{Cieliebak_Frauenfelder_Oancea_Rabinowitz_Floer_homology_and_symplectic_homology}). From
$\mathbf{J}$ we obtain an $L^{2}$-inner product $\left\langle \left\langle \cdot,\cdot\right\rangle \right\rangle _{\mathbf{J}}$
on $\mathcal{L}X\times\mathbb{R}$ by
\[
\left\langle \left\langle (\zeta,l),(\zeta',l')\right\rangle \right\rangle _{\mathbf{J}}:=\int_{0}^{1}\omega(J_{t}\zeta(t),\zeta'(t))dt+ll'.
\]
We denote by $\nabla_{\mathbf{J}}\mathcal{A}_{c}$ the gradient of
$\mathcal{A}_{c}$ with respect to $\left\langle \left\langle \cdot,\cdot\right\rangle \right\rangle _{\mathbf{J}}$.
Given $-\infty<a<b<\infty$ we denote by $\mathcal{M}_{\mathbf{J}}(\mathcal{A}_{c})_{a}^{b}$
the set of smooth maps $u=(v,\eta)\in C^{\infty}(\mathbb{R},\mathcal{L}X)$
that satisfy 
\[
\partial_{s}u+\nabla_{\mathbf{J}}\mathcal{A}_{c}(u(s))=0,
\]
\[
a<\mathcal{A}_{c}(u(s))<b\ \ \ \mbox{for all }s\in\mathbb{R}.
\]

The following result is by now standard (see for instance \cite[Proposition 2.5]{Abbo_Schwarz_Estimates_and_computations_in_RFH}
and \cite[Theorem 2.9]{Albers_Frauenfelder_Leafwise_intersections_and_RFH}).
\begin{prop}
\label{prop:linfinity}Given $-\infty<a<b<\infty$ and $\mathbf{J}$
as above, if $c>c_{0}$ then there exists a compact set $K=K(c,\mathbf{J},a,b)\subseteq X\times\mathbb{R}$
such that for all $u=(v,\eta)\in\mathcal{M}_{\mathbf{J}}(\mathcal{A}_{c})_{a}^{b}$
one has 
\[
u(\mathbb{R}\times S^{1})\subseteq K.
\]

\end{prop}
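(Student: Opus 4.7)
The plan is to adapt the standard two-step strategy for $L^\infty$-bounds in Rabinowitz Floer homology (see \cite{Cieliebak_Frauenfelder_Restrictions_to_displaceable_exact_contact_embeddings,Abbo_Schwarz_Estimates_and_computations_in_RFH,Albers_Frauenfelder_Leafwise_intersections_and_RFH}): first bound the $v$-component of $u$ via a maximum principle tailored to the SFT-type $\mathbf{J}$, then bound the Lagrange multiplier $\eta$ via an approximate-critical-point analysis. The starting point for both steps is that $\mathcal{A}_c$ is non-increasing along negative gradient flow lines, so the action window immediately gives the energy estimate
\[
E(u):=\int_{\mathbb{R}}\|\partial_s u(s)\|^2_{\mathbf{J}}\,ds = -\int_{\mathbb{R}}\tfrac{d}{ds}\mathcal{A}_c(u(s))\,ds \leq b-a.
\]

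For the $v$-bound, the crucial observation is that outside the $r$-support of $H_c$, i.e.\ on $\Sigma\times[e^{c}+1,\infty)\subseteq X$, we have $H_c\equiv 0$ and $F=F_0=f(r)$, so the Hamiltonian vector field driving the Floer equation in that region reduces to $\eta\chi(t)f'(r)R_\alpha$, which is purely horizontal. Because $\mathbf{J}$ is of SFT-type at infinity, a standard computation then shows that $r\circ v$ is subharmonic there \emph{independently of the size of $\eta$}, so the maximum principle forces $r\circ v\leq e^{c}+1$ whenever $v$ enters this region. Since $M\cup\bigl(\Sigma\times[1,e^{c}+1]\bigr)$ is compact, this yields a uniform $C^0$-bound on $v$.

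For the $\eta$-bound, I would apply a pigeonhole argument to the energy estimate: on every unit interval $[s,s+1]$ there exists $s_1$ with $\|\nabla_{\mathbf{J}}\mathcal{A}_c(u(s_1))\|_{\mathbf{J}}\leq\sqrt{b-a}$. At such an almost-critical instant, the identity
\[
\mathcal{A}_c(v,\eta)+\eta\int_0^1\chi(t)F(v)\,dt+\int_0^1 H_c(t,v)\,dt=\int_0^1 v^*\lambda,
\]
combined with the computations $\lambda(X_F)=f'(r)$ and $\lambda(X_{H_c})=H_c$ carried out in the proof of Lemma \ref{lem:critical points}, allows one to approximately solve for $\eta$: smallness of $\nabla_{\mathbf{J}}\mathcal{A}_c$ forces $v$ close to $\Sigma\times\{1\}$, so that $\int_0^1\chi F(v)\,dt$ is small and $\int_0^1\chi f'(r(v))\,dt$ is close to $1$, while the $C^0$-bound on $v$ controls the remaining terms. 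This yields $|\eta(s_1)|\leq C_1(a,b,c,\mathbf{J})$, and the bound propagates to all $s$ via
\[
|\eta(s)-\eta(s_1)|\leq\int_{s_1}^s|\partial_\sigma\eta|\,d\sigma\leq|s-s_1|^{1/2}\,E(u)^{1/2}
\]
applied on successive unit intervals.

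The most delicate step is the quantitative approximate-critical-point analysis: one needs to convert smallness of $\|\nabla_{\mathbf{J}}\mathcal{A}_c(v,\eta)\|_{\mathbf{J}}$ into a genuine $C^0$-closeness of $v(t)$ to $\Sigma\times\{1\}$, not merely into closeness after integration against the cutoff $\chi$. This is precisely the content of \cite[Proposition 2.5]{Abbo_Schwarz_Estimates_and_computations_in_RFH} and \cite[Theorem 2.9]{Albers_Frauenfelder_Leafwise_intersections_and_RFH}, whose arguments should go through essentially verbatim in our setting since the cutoff structure of $H_c$ (in particular its vanishing on $t\in[0,\tfrac12]$) and the convexity of $F$ match the structural hypotheses used there.
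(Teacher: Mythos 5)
Your overall strategy -- the energy identity $E(u)\le b-a$ from the action window, a loop bound at infinity, and a pigeonhole/fundamental-lemma argument for the Lagrange multiplier with the propagation estimate $|\eta(s)-\eta(s_1)|\le|s-s_1|^{1/2}E(u)^{1/2}$ -- is exactly the standard route, and it is all the paper itself offers: Proposition \ref{prop:linfinity} is stated with a citation to \cite[Proposition 2.5]{Abbo_Schwarz_Estimates_and_computations_in_RFH} and \cite[Theorem 2.9]{Albers_Frauenfelder_Leafwise_intersections_and_RFH} in place of a proof. The multiplier half of your argument is essentially correct as sketched.

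The step that does not work as stated is the claim that $r\circ v$ is subharmonic on $\Sigma\times[e^{c}+1,\infty)$ ``independently of the size of $\eta$''. The obstruction is not the size of $\eta$ but its $s$-dependence: in that region the loop component solves a Floer equation for the Hamiltonian $\eta(s)\chi(t)F$, and computing $\Delta(r\circ v)$ produces, besides a term of definite sign and a first-order term, a term proportional to $\partial_s\eta(s)\,\chi(t)\,r f'(r)$ which has no sign. Since in this paper $F=\tfrac12(r^2-1)$ is \emph{not} cut off to be locally constant at infinity, this term cannot be discarded, and the pointwise maximum principle fails without prior control of $\partial_s\eta$. This matters for your architecture because you run the $v$-bound first and then use ``the $C^0$-bound on $v$'' to close the $\eta$-estimate, so as written the two halves are circular. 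The standard repair is to reverse the order: the fundamental lemma shows that smallness of $\|\nabla_{\mathbf{J}}\mathcal{A}_c(v,\eta)\|$ by itself confines $v$ to a fixed compact set (near $\Sigma$ for $t\in[0,\tfrac12]$ because $\int_0^1\chi F(v)\,dt$ is small and $F(v(t))$ is almost constant along the approximate flow, and for $t\in[\tfrac12,1]$ because $H_c$ is compactly supported with bounded flow), whence $|\eta|\le C\left(|\mathcal{A}_c(v,\eta)|+1\right)$ using $\lambda(X_F)-F=rf'(r)-f(r)\ge\tfrac12-\varepsilon>0$; only afterwards, with $\eta$ under control, does one obtain the uniform loop bound (via the integrated no-escape argument or the estimates of \cite{Abbo_Schwarz_Estimates_and_computations_in_RFH} for the quadratic Hamiltonian). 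With that reordering and that correction your proof matches the one the paper is invoking.
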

Theorem \ref{thm:main} follows from Proposition \ref{prop:linfinity}
by arguments from \cite{Albers_Frauenfelder_Leafwise_intersections_and_RFH},
as we now explain.
\begin{proof}[Proof of Theorem \ref{thm:main}]

(1.) Suppose $\psi$ is a local contactomorphism. Fix $c>c_{0}$.
After possibly replacing $\psi$ by $\psi_{R}$ for some $R$ sufficiently
large (see Remark \ref{loca}) we may assume $\left\Vert H_{c}\right\Vert <\wp(\Sigma,\alpha)$
(where $H_{c}$ is the Hamiltonian corresponding to $\psi_{R}$ -
note we are implicity using Remark \ref{It-is-important} here). 

It follows from the proof of Theorem A in \cite{Albers_Frauenfelder_Leafwise_intersections_and_RFH}
that there exists a critical point $(v,\eta)$ of $\mathcal{A}_{c}$
with $\left|\eta\right|\leq\left\Vert H_{c}\right\Vert $. Thus the
translated point is neccesarily a genuine translated point of $\psi_{R}$,
that is, $v(\tfrac12)\in\mbox{int}(\mathfrak{S}(\psi_{R}))$. Thus $\psi_{R}$,
and hence $\psi$, has a translated point in the interior of its support.\\

(2.) It follows directly from Proposition \ref{prop:linfinity} that
the \textbf{Rabinowitz Floer homology }$\mbox{RFH}(\mathcal{A}_{c})_{a}^{b}$
is well defined for $c>c_{0}$. Here we are using the fact that the Rabinowitz action
functional is generically Morse. This is proved exactly as in \cite[Appendix A]{Albers_Frauenfelder_Leafwise_intersections_and_RFH}.
The only difference is that we are working with a more restrictive
class of Hamiltonian perturbations (i.e. rather than arbitrary Hamiltonians
with time support in $[\tfrac12,1]$, here we work only with contact Hamiltonians
which have been reparametrized to have time support in $[\tfrac12,1]$),
but the proof still goes through. In fact, the only place in the proof given in \cite{Albers_Frauenfelder_Leafwise_intersections_and_RFH}
where the fact that we are working with a more restrictive class of
Hamiltonian perturbations could conceivably cause problems is in deducing
Equation (A.21) from Equation (A.18) on \cite[p95]{Albers_Frauenfelder_Leafwise_intersections_and_RFH}.
Nevertheless, the reader may check that even in our more restricted
setting Equation (A.21) does indeed follow from Equation (A.18).

Moreover $\mbox{RFH}(\mathcal{A}_{c})_{a}^{b}$ is independent of
the choice of $c>c_{0}$. Thus it makes sense to define $\mbox{RFH}(\{\psi_{t}\},\Sigma,X)$
via 
\[
\mbox{RFH}(\{\psi_{t}\},\Sigma,X):=\underset{a\downarrow-\infty}{\underrightarrow{\lim}}\underset{b\uparrow\infty}{\underleftarrow{\lim}}\mbox{RFH}_{*}(\mathcal{A}_{c})_{a}^{b}.
\]
See \cite{Albers_Frauenfelder_Leafwise_intersections_and_RFH} for more information. In fact,
by arguing as in \cite[Theorem 2.16]{Albers_Frauenfelder_Leafwise_intersections_and_RFH}, we
have 
\[
\mbox{RFH}(\{\psi_{t}\},\Sigma,X)\cong\mbox{RFH}(\Sigma,X),
\]
where $\mbox{RFH}(\Sigma,X)$ denotes the Rabinowitz Floer homology
of $(\Sigma,X)$, as defined in \cite{Cieliebak_Frauenfelder_Restrictions_to_displaceable_exact_contact_embeddings}.
The second statement of Theorem \ref{thm:main} now follows from Lemma
\ref{lem:critical points} and Corollary \ref{cor:For--the}, exactly
as in \cite[Theorem C]{Albers_Frauenfelder_Leafwise_intersections_and_RFH}.\\

(3.) The third statement in Theorem \ref{thm:main} follows from the Main Theorem in \cite{Kang_Survival_of_infinitely_many_critical_points_for_the_Rabinowitz_action_functional}.\\

(4.) The fact that generically one doesn't find
translated points on closed Reeb orbits when $\dim\,\Sigma\geq3$
is proved exactly as in \cite[Theorem 3.3]{Albers_Frauenfelder_Leafwise_Intersections_Are_Generically_Morse}
(as in Statement (2) above, the fact that we are working with a more
restrictive class of Hamiltonian perturbations does not cause complications
here). \\

(5.) Finally, the fifth statement is proved by arguing as follows.
Fix $k\in\{2,3,\dots,\infty\}$. Denote by $\mathcal{H}^{k}$ the
class of $C^{k}$ contact Hamiltonians $H$, reparametrized so that
$H(t,\cdot)=0$ for $t\in[0,\tfrac12]$, which additionally have
been cutoff outside of a neighborhood of $\Sigma\times\{1\}$ as described above Remark \ref{rmk:above_which_is_cut_off}.

Given a Hamiltonian $H$ and $m\in\mathbb{Z}$ we set
$$
H^{\#m}(t,x):=mH(mt,x).
$$
If $\varphi$ is the time-1-map of the Hamiltonian flow of $H$ then the time-1-map of the Hamiltonian flow of $H^{\#m}$ is $\varphi^m$. Note that if $H\in\mathcal{H}^{k}$
then $H^{\#m}\in\mathcal{H}^{k}$ for all $m\in\mathbb{N}$. We point out that the assignment $H\mapsto H^{\#m}$ is linear.

Given $H\in\mathcal{H}^{k}$, we denote by $\mathcal{A}_{H}$ the
Rabinowitz action functional 
\[
\mathcal{A}_{H}(v,\eta)=\int_{0}^{1}v^{*}\lambda-\eta\int_{0}^{1}\chi(t)F(v)dt-\int_{0}^{1}H(t,v)dt
\]
(so that the functional $\mathcal{A}_{c}$ would now be written as
$\mathcal{A}_{H_{c}}$). Let $\mathcal{L}=W^{1,2}(S^{1},X)$ and let $\mathcal{E}$ denote
the Banach bundle over $\mathcal{L}$ with fibre $\mathcal{E}_{v}:=L^{2}(S^{1},v^{*}TX)$. Fix $l,m\in\mathbb{N}$. We now define a section 
\[
\sigma:\mathcal{L}\times\mathbb{R}\times\mathcal{L}\times\mathbb{R}\times\mathcal{H}^{k}\rightarrow\mathcal{E}^{\vee}\times\mathbb{R}\times\mathcal{E}^{\vee}\times\mathbb{R}
\]
by 
\[
\sigma(v,\eta,w,\tau,H):=\left(\mbox{d}\mathcal{A}_{H^{\#l}}(v,\eta),\mbox{d}\mathcal{A}_{H^{\#m}}(w,\tau)\right).
\]
Let $\mathcal{M}:=\sigma^{-1}(\mbox{zero section})$, so that 
\[
\mathcal{M}=\left\{ (v,\eta,w,\tau,H)\,:\,((v,\eta),(w,\tau))\in\mbox{Crit}(\mathcal{A}_{H^{\#l}})\times\mbox{Crit}(\mathcal{A}_{H^{\#m}})\right\} ,
\]
and set 
\[
\mathcal{M}_{H}:=\{(v,\eta,w,\tau)\,:\,(v,\eta,w,\tau,H)\in\mathcal{M}\}=\mbox{Crit}(\mathcal{A}_{H^{\#l}})\times\mbox{Crit}(\mathcal{A}_{H^{\#m}}).
\]
Next, define 
\[
\mathcal{B}:=\left\{ (v,\eta,w,\tau,H)\,:\, v(t)\ne w(t)\mbox{ for all }t\in[\tfrac12,1]\right\} ,
\]
and set 
\[
\mathcal{M}^{*}:=\mathcal{B}\cap\mathcal{M},
\]
\[
\mathcal{M}_{H}^{*}:=\{(v,\eta,w,\tau)\,:\,(v,\eta,w,\tau,H)\in\mathcal{M}^{*}\}.
\]
Consider the evaluation map
\[
\phi_{\textrm{eval}}:\mathcal{L}\times\mathbb{R}\times\mathcal{L}\times\mathbb{R}\times\mathcal{H}^{k}\rightarrow\Sigma\times\Sigma
\]
defined by 
\[
\phi_{\textrm{eval}}(v,\eta,w,\tau,H):=(v(0),w(\tfrac12)).
\]
We would like to prove that there exists a generic set $\mathcal{H}_{l,m}^{k}\subseteq\mathcal{H}^{k}$
with the property that 
\begin{equation}
H\in\mathcal{H}_{l,m}^{k}\ \ \ \Rightarrow\ \ \ \mathcal{A}_{H^{\#l}}\mbox{ and }\mathcal{A}_{H^{\#m}}\mbox{ are Morse and }\phi_{\textrm{eval}}(\cdot,\cdot,\cdot,\cdot,H)|_{\mathcal{M}_{H}^{*}}\ \mbox{is a submersion.}\label{eq:submersion}
\end{equation}
Note that we already know (cf.~(2)) above) that $\mathcal{A}_{H^{\#l}}$
and $\mathcal{A}_{H^{\#m}}$ are Morse for generic $H\in\mathcal{H}^{k}$. 

This is helpful for the following reason: \eqref{eq:submersion} implies
that for $H\in\mathcal{H}_{l,m}^{k}$, if $(v,\eta)$ is a critical
point of $\mathcal{A}_{H^{\#l}}$, and $(w,\tau)$ is a critical point
of $\mathcal{A}_{H^{\#m}}$ which satisfy $v(t)\ne w(t)$ for all
$t\in[\tfrac12,1]$, then $v(0)\ne w(\tfrac12)$. To see this one notes that
if $\phi_{\textrm{eval}}(\cdot,\cdot,\cdot,\cdot,H)|_{\mathcal{M}_{H}^{*}}$
is a submersion then it is certainly transverse to the diagonal $\Delta\subseteq\Sigma\times\Sigma$,
and thus the preimage 
\[
\left(\phi_{\textrm{eval}}(\cdot,\cdot,\cdot,\cdot,H)|_{\mathcal{M}_{H}^{*}}\right)^{-1}(\Delta)\subseteq\mathcal{M}_{H}^{*},
\]
if non-empty, should have codimension $2n-1$. Since $\mathcal{A}_{H^{\#l}}$
and $\mathcal{A}_{H^{\#m}}$ are both Morse, $\mathcal{M}_{H}^{*}$
is zero-dimensional, and thus $\left(\phi_{\textrm{eval}}(\cdot,\cdot,\cdot,\cdot,H)|_{\mathcal{M}_{H}^{*}}\right)^{-1}(\Delta)=\emptyset$,
as required. 

Using this, we complete the proof by making use of the following claim.\newline

\noindent \textbf{Claim:} Set $\mathcal{H}^{*}:=\bigcap_{k,l,m\geq2}\mathcal{H}_{l,m}^{k}$.
Suppose $H\in\mathcal{H}^{*}$, and set $\varphi:=\phi_{1}^{H}$.
Then for all pairs $(l,m)$ of positive integers with $l\ne m$, $\varphi^{l}$
and $\varphi^{m}$ do not have any common leaf-wise intersection points.\\

To prove the claim we argue by contradiction. Without loss of generality
assume $l<m$, and suppose $x\in\Sigma$ is a common leaf-wise intersection
point of $\varphi^{l}$ and $\varphi^{m}$. Thus there exists $\eta,\tau\in\mathbb{R}$
such that 
\[
\varphi^{l}(x)=\phi_{\eta}^{\alpha}(x),\ \ \ \varphi^{m}(x)=\phi_{\tau}^{\alpha}(x).
\]
Then 
\[
\varphi^{m-l}(\varphi^{l}(x))=\phi_{\tau-\eta}^{\alpha}(\varphi^{l}(x)),
\]
so $\varphi^{l}(x)$ is a leaf-wise intersection point of $\varphi^{m-l}$.
Let $(v,-\eta)\in\mbox{Crit}(\mathcal{A}_{H^{\#l}})$ and $(w,-\tau+\eta)\in\mbox{Crit}(\mathcal{A}_{H^{\#m-l}})$
denote the critical points of $\mathcal{A}_{H^{\#l}}$ and $\mathcal{A}_{H^{\#m-l}}$
corresponding to $x$ and $\varphi^{l}(x)$ respectively, so that
$v(0)=\varphi^{l}(x)$ and $v(\tfrac12)=x$, and $w(0)=\varphi^{m}(x)$
and $w(\tfrac12)=\varphi^{l}(x)=v(0)$. By construction $v(t)\ne w(t)$
for all $t\in[\tfrac12,1]$, and this gives the desired contradiction.

It remains therefore to prove \eqref{eq:submersion}. To prove this
we examine the vertical derivative of $\sigma$, 
\[
D\sigma(v,\eta,w,\tau,H):T_{v}\mathcal{L}\times\mathbb{R}\times T_{w}\mathcal{L}\times\mathbb{R}\times\mathcal{H}^{k}\rightarrow\mathcal{E}_{v}^{\vee}\times\mathbb{R}\times\mathcal{E}_{w}^{\vee}\times\mathbb{R},
\]
which is given by 

$$
\begin{aligned}
(\hat{v},\hat{\eta},\hat{w},\hat{\tau},\hat{H})\mapsto&\left(\mathbf{H}_{\mathcal{A}_{H^{\#l}}}(v,\eta)((\hat{v},\hat{\eta}),\bullet),\mathbf{H}_{\mathcal{A}_{H^{\#m}}}(w,\tau)((\hat{w},\hat{\tau}),\bullet)\right)\\
&+\left(0,\int_{0}^{1}\hat{H}^{\#l}(t,v)dt,0,\int_{0}^{1}\hat{H}^{\#m}(t,w)dt\right), 
\end{aligned}
$$
where, e.g. $\mathbf{H}_{\mathcal{A}_{H^{\#l}}}(v,\eta)$ denotes
the Hessian of $\mathcal{A}_{H^{\#l}}$ at the critical point $(v,\eta)$.
Let 
\[
\mathcal{V}(v,\eta,w,\tau,H)\subseteq T_{v}\mathcal{L}\times\mathbb{R}\times T_{w}\mathcal{L}\times\mathbb{R}\times\mathcal{H}^{k}
\]
denote the subspace of quintuples $(\hat{v},\hat{\eta},\hat{w},\hat{\tau},\hat{H})$
satisfying 
\[
\hat{v}(0)=\hat{w}(\tfrac12)=0.
\]
In order to prove that $\phi_{\textrm{eval}}(\cdot,\cdot,\cdot,\cdot,H)|_{\mathcal{M}_{H}^{*}}$
is a submersion, it suffices to show that the restriction $D\sigma(v,\eta,w,\tau,H)|_{\mathcal{V}(v,\eta,w,\tau,H)}$
is surjective for all $(v,\eta,w,\tau,H)\in\mathcal{M}^{*}$ (see
for instance, \cite[Lemma 3.5]{Albers_Frauenfelder_Leafwise_Intersections_Are_Generically_Morse}).
This can be can be proved exactly as in \cite[Proposition A.2]{Albers_Frauenfelder_Leafwise_intersections_and_RFH},
but for the convenience of the reader we sketch the basic idea here,
since the proof is more complicated than these arguments often are.
In order to show that $D\sigma(v,\eta,w,\tau,H)|_{\mathcal{V}(v,\eta,w,\tau,H)}$
is surjective, it suffices to show that if the annihilator of its image
vanishes. 

The first thing to notice is that the restriction to $\mathcal{V}(v,\eta,w,\tau,H)$
makes no difference here, since as the Hessian of the Rabinowitz action functional (and hence $D\sigma$) is a local operator, the annihilator
of the image of $D\sigma(v,\eta,w,\tau)|_{\mathcal{V}(v,\eta,w,\tau,H)}$
is the same as the annihilator of all of $\mbox{im}\, D\sigma(v,\eta,w,\tau,H)$. More precisely, an element $(\hat{v},\hat{\eta},\hat{w},\hat{\tau})$ in the annihilator of $D\sigma(v,\eta,w,\tau)$ satisfies a certain ODE of the form $P(\hat{v}(t),\hat{\eta},\hat{w}(t),\hat{\tau})=0$ for all $t \in S^1$. See \eqref{eq:P is zero} 
below for one way of formulating the equation $P=0$. An element in the annihilator of  $D\sigma(v,\eta,w,\tau)|_{\mathcal{V}(v,\eta,w,\tau,H)}$ satisfies the same ODE, except possibly at $t=0$ and $t=\tfrac12$. But elliptic boostrapping implies that the solutions of the ODE $P=0$ are of class $C^{k+1}$, and thus by continuity if $(\hat{v},\hat{\eta},\hat{w},\hat{\tau})$ solves the equation $P(\hat{v}(t),\hat{\eta},\hat{w}(t),\hat{\tau})=0$ for all $t \in S^1 \backslash \{0,\tfrac12\}$ then in fact $P(\hat{v}(t),\hat{\eta},\hat{w}(t),\hat{\tau})=0$ also for $t=0,\tfrac12$.

Unfortunately the ODE $P(\hat{v},\hat{\eta},\hat{w},\hat{\tau})=0$ is somewhat messy to write down (and hence solve) directly. The difficulty stems from the fact that we require our Hamiltonians
to have time support in $[\tfrac12,1]$. As explained in \cite[Appendix A]{Albers_Frauenfelder_Leafwise_intersections_and_RFH},
this difficulty can be overcome by translating the problem into one
on \textbf{twisted loop space}s. Here is the definition. Given $H\in \mathcal{H}^k$, set 
\[
\mathcal{L}_{H}:=\left\{ v\in W^{1,2}([0,1],X)\,:\, v(0)=\phi_{1}^{\chi F+H}(v(1))\right\} ,
\]
where $\phi_{t}^{\chi F+H}$ is the Hamiltonian flow of $\chi F+H$. There
is a diffeomorphism $\Phi_{H}:\mathcal{L}_{H}\rightarrow\mathcal{L}$
given by 
\[
\Phi_{H}(v)(t):=\phi_{t}^{\chi F+H}(v(t)).
\]
Define 
\[
\widetilde{\mathcal{A}}_{H}:=\mathcal{A}_{H}\circ(\Phi_{H},\mathbb{1}_{\mathbb{R}}):\mathcal{L}_{H}\times\mathbb{R}\rightarrow\mathbb{R}.
\]
Note that if $(v,\eta)\in\mbox{Crit}(\mathcal{A}_{H})$ then $(v(0),\eta)\in\mbox{Crit}(\widetilde{\mathcal{A}}_{H}),$
where $v(0)$ is thought of as a constant curve in $\mathcal{L}_{H}$.
Let $\mathcal{E}_{H}\rightarrow\mathcal{L}_{H}$ denote the Banach
bundle whose fibre $(\mathcal{E}_{H})_{v}$ is again given by $L^{2}([0,1],v^{*}TX)$. 

Instead of looking at $D\sigma(v,\eta,w,\tau,H)$, we study the transformed
operator
\[
L:T_{v(0)}\mathcal{L}_{H^{\#l}}\times\mathbb{R}\times T_{w(0)}\mathcal{L}_{H^{\#m}}\times\mathbb{R}\times\mathcal{H}^{k}\rightarrow(\mathcal{E}_{H^{\#l}}\times\mathbb{R})^{\vee}\times(\mathcal{E}_{H^{\#m}}\times\mathbb{R})^{\vee}
\]
which is defined by requiring that for tuples $(\hat{v},\hat{\eta},\hat{w},\hat{\tau}),(\hat{v}',\hat{\eta}',\hat{w}',\hat{\tau}')$  in $\mathcal{E}_{H^{\#l}}\times\mathbb{R}\times\mathcal{E}_{H^{\#m}}\times\mathbb{R}$ and $\hat{H}\in \mathcal{H}^k$
the value of 
\[
(L_{1},L_{2}):=\left\langle L(\hat{v}',\hat{\eta}',\hat{w}',\hat{\tau}',\hat{H}),(\hat{v},\hat{\eta},\hat{w},\hat{\tau})\right\rangle \in\mathbb{R}\times\mathbb{R},
\]
is given by
\[
L_{1}=\mathbf{H}_{\widetilde{\mathcal{A}}_{H^{\#l}}}(v(0),\eta)((\hat{v}',\hat{\eta}'),(\hat{v},\hat{\eta}))+\int_{0}^{1}\mbox{d}(\Phi_{H^{\#l}}^{*}\hat{H}^{\#l})(\hat{v}(t))dt,
\]
\[
L_{2}=\mathbf{H}_{\widetilde{\mathcal{A}}_{H^{\#m}}}(w(0),\tau)((\hat{w}',\hat{\tau}'),(\hat{w},\hat{\tau}))+\int_{0}^{1}\mbox{d}(\Phi_{H^{\#m}}^{*}\hat{H}^{\#m})(\hat{w}(t))dt.
\]
Here as before $\mathbf{H}_{\widetilde{\mathcal{A}}_{H^{\#l}}}(v(0),\eta)$
denotes the Hessian of $\widetilde{\mathcal{A}}_{H^{\#l}}$ at the
critical point $(v(0),\eta)$. 

After making this transformation it is not difficult to write down
the equations that  $(\hat{v},\hat{\eta},\hat{w},\hat{\tau})$ must satisfy in order to lie in the annihilator of the image of $L$.  Indeed, firstly by taking $\hat{v}'=\hat{\eta}'=0$ in the equation $L_1=0$ we see that one must have $\hat{v}(t)=0$ for $t\in [0,\tfrac12]$. Secondly, taking $\hat{H}=0$ we see that $(\hat{v},\hat{\eta})$ must satisfy the ODE
\begin{equation}
\partial_{t}\hat{v}-\hat{\eta}\chi(t)X_{F}(v(0))=0,\ \ \ \hat{v}(0)=d\phi_{1}^{\chi F+H^{\#l}}(v(0))(\hat{v}(1)).\label{eq:P is zero}
\end{equation}
Since $\hat{v}(1)=0$, the second equation in \eqref{eq:P is zero} implies that $\hat{v}(0)=0$. Thus integrating the first equation in \eqref{eq:P is zero} tells us that 
\begin{equation}
\hat{v}(t)=\hat{\eta}X_F(v(0))\cdot \int_0^t \chi(s)ds.
\label{eq:P is zero 2}
\end{equation}
Our assumption on $\chi$ thus implies for $t\ge \tfrac12$ one has $0=\hat{v}(t)=\hat{\eta}X_F(v(0))$.  Since $X_F(v(0))\ne0$ we must have $\hat{\eta}=0$, and thus \eqref{eq:P is zero 2} implies that $\hat{v}(t)=0$ for all $t$. 
A similar argument applies to $(\hat{w},\hat{\tau})$, and this finally completes the proof.\end{proof}

\bibliographystyle{amsalpha}
\bibliography{peterbibtex}

\begin{tabular}{p{85ex}}
$ $\\
\noindent\small \textsc{Peter Albers, Mathematisches Institut, WWU M\"unster, Germany}\\[.5ex]

\noindent\small \emph{Email:} \texttt{peter.albers@uni-muenster.de}\\[3ex]

\noindent\small \textsc{Will J.~Merry, Departement Mathematik, ETH Z\"urich, Switzerland}\\[.5ex] 

\noindent\small \emph{Email:} \texttt{merry@math.ethz.ch}

\end{tabular}

\end{document}